\definecolor{red}{rgb}{1,0,0}
\definecolor{blue}{rgb}{0,0,1}
\definecolor{green}{rgb}{0,.6,0}
\numberwithin{figure}{section}   % added LH 11/15/17
\newtheorem{thm}{Theorem}
\theoremstyle{definition}
\theoremstyle{definition}
\theoremstyle{definition}
\newcommand{\bit}{\begin{itemize}}
\newcommand{\eit}{\end{itemize}}
\newcommand{\ben}{\begin{enumerate}}
\newcommand{\een}{\end{enumerate}}
\newcommand{\beq}{\begin{equation}}
\newcommand{\eeq}{\end{equation}}
\newcommand{\bea}{\begin{eqnarray*}} % * means no number
\newcommand{\eea}{\end{eqnarray*}}
\newcommand{\bpf}{\begin{proof}}
\newcommand{\epf}{\end{proof}\ms}
\newcommand{\bmt}{\begin{bmatrix}}
\newcommand{\emt}{\end{bmatrix}}
\newcommand{\ms}{\medskip}
\title{A note on long rainbow arithmetic progressions}
\author{Jesse Geneson\\
\small\tt ISU\\
\small\tt geneson@gmail.com
}
\begin{document}

\maketitle

\begin{abstract}
Jungi\'{c} et al (2003) defined $T_{k}$ as the minimal number $t \in \mathbb{N}$ such that there is a rainbow arithmetic progression of length $k$ in every equinumerous $t$-coloring of $[t n]$ for every $n \in \mathbb{N}$. They proved that for every $k \geq 3$, $\lfloor \frac{k^2}{4} \rfloor <  T_{k} \leq \frac{k(k-1)^2}{2}$ and conjectured that $T_{k} = \Theta(k^2)$. We prove for all $\epsilon > 0$ that $T_{k} = O(k^{5/2+\epsilon})$ using the K\H{o}v\'{a}ri-S\'{o}s-Tur\'{a}n theorem and Wigert's bound on the divisor function.
\end{abstract}

A coloring is called \emph{equinumerous} if each color is used an equal number of times. We call an arithmetic progression \emph{rainbow} if it does not contain any terms of the same color. We use the notation $AP(k)$ for arithmetic progression of length $k$, so $T_{k}$ is the minimal number $t \in \mathbb{N}$ such that there is a rainbow $AP(k)$ in every equinumerous $t$-coloring of $[t n]$ for every $n \in \mathbb{N}$.

Besides the bounds $\lfloor \frac{k^2}{4} \rfloor <  T_{k} \leq \frac{k(k-1)^2}{2}$ for every $k \geq 3$ from \cite{jungic}, it is also known that $T_{k} > k$ for every $k > 5$ \cite{AF}, $T_{3} = 3$ \cite{AF, JR}, and $T_{4} > 4$ \cite{CJR}. A variant of $T_{k}$ in which the coloring is not required to be equinumerous was investigated in \cite{butler}. In this note, we prove for all $\epsilon > 0$ that $T_{k} = O(k^{5/2+\epsilon})$. 

Our proof represents the occurrences of a single color in $[t n]$ with a family of 0-1 matrices, and we bound $T_{k}$ using the K\H{o}v\'{a}ri-S\'{o}s-Tur\'{a}n theorem. We say that a 0-1 matrix $A$ \emph{avoids} a 0-1 matrix $P$ if there is no submatrix in $A$ that is equal to $P$ or that can be transformed into $P$ by changing some ones to zeroes. Define $ex(m, n, P)$ as the maximum number of ones in an $m \times n$ 0-1 matrix that avoids $P$. We let $R_{s, t}$ denote the $s \times t$ matrix of all ones. 

\begin{thm}\label{KSTt} \cite{KST}
For all $m, n, s, t \geq 2$, $ex(m, n, R_{s, t}) \leq (s-1)^{1/t} (n-t+1) m^{1-1/t}+(t-1)m$.
\end{thm}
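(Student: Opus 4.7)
The plan is to use the classical double-counting argument of Kővári, Sós, and Turán. Let $A$ be an $m \times n$ 0-1 matrix with $z$ ones that avoids $R_{s,t}$, with row sums $d_1, \ldots, d_m$, so that $\sum_i d_i = z$. Viewing $A$ as the biadjacency matrix of a bipartite graph, the avoidance of $R_{s,t}$ is equivalent to the statement that for every $t$-subset $T$ of columns, at most $s-1$ rows have a $1$ in every column of $T$. I would then double-count the pairs $(i, T)$ where $i$ is a row, $T$ is a $t$-subset of columns, and $A_{i,j} = 1$ for all $j \in T$: counting first by rows gives $\sum_{i=1}^m \binom{d_i}{t}$, while counting first by column subsets and using the avoidance hypothesis gives at most $(s-1)\binom{n}{t}$.

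Next, I would apply Jensen's inequality to the convex function $x \mapsto \binom{x}{t}$ on $[t-1, \infty)$. If $z/m \leq t-1$ then $z \leq (t-1)m$ and the claimed bound holds trivially, so I may assume $z/m \geq t-1$; Jensen then gives $\sum_i \binom{d_i}{t} \geq m \binom{z/m}{t}$, and combining with the double count yields
\[
m \binom{z/m}{t} \;\leq\; (s-1) \binom{n}{t}.
\]

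Finally, I would use the elementary inequality $\binom{z/m}{t} \geq \frac{(z/m - t + 1)^t}{t!}$ (valid because every factor in the falling factorial is at least $z/m - t + 1 \geq 0$) together with an appropriate upper estimate on $\binom{n}{t}$ that exposes the factor $n-t+1$, and then take $t$-th roots to isolate $z - (t-1)m$ and recover the claimed bound.

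The main obstacle is this last step. Matching precisely the stated prefactor $(s-1)^{1/t}(n-t+1) m^{1-1/t}$ requires choosing an upper estimate on $n(n-1)\cdots(n-t+1)$ that produces $(n-t+1)$ to the first power rather than the looser $(n-t+1)^{1/t} n^{1-1/t}$ that emerges from a direct term-by-term comparison; managing this clean extraction, while keeping the additive $(t-1)m$ correction term from the Jensen boundary case in the right form, is the most delicate part of the argument.
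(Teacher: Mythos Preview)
The paper does not include a proof of this theorem; it is simply quoted from \cite{KST} and used as a black box in the later argument. There is therefore nothing in the paper to compare your proposal against.

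That said, your outline is exactly the classical K\H{o}v\'{a}ri--S\'{o}s--Tur\'{a}n double count, and it is correct through the inequality $m\binom{z/m}{t} \leq (s-1)\binom{n}{t}$. The obstacle you flag in the final step is real if you insist on bounding the two binomials separately, but it disappears if you compare them termwise as a ratio. Writing $\bar d = z/m$ and noting $\bar d \leq n$ (each row has at most $n$ ones), one checks that $\dfrac{\bar d - i}{\,n - i\,} \geq \dfrac{\bar d - t + 1}{\,n - t + 1\,}$ for every $0 \leq i \leq t-1$, whence
\[
\frac{s-1}{m} \;\geq\; \frac{\binom{\bar d}{t}}{\binom{n}{t}} \;=\; \prod_{i=0}^{t-1} \frac{\bar d - i}{n - i} \;\geq\; \left(\frac{\bar d - t + 1}{n - t + 1}\right)^{t}.
\]
Taking $t$-th roots and rearranging yields precisely $z \leq (s-1)^{1/t}(n-t+1)m^{1-1/t} + (t-1)m$, with the additive $(t-1)m$ appearing automatically rather than as a separate boundary case.
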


Let $\tau(n)$ be the number of positive integer divisors of $n$. We use Wigert's upper bound on $\tau(n)$ \cite{wigert} for our bound on $T_{k}$.

\begin{thm}\label{wigertT} \cite{wigert}
$\tau(n) = n^{O(\frac{1}{\ln{\ln{n}}})}$
\end{thm}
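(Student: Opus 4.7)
The plan is to work with the logarithmic form of the bound, i.e., to prove that $\log \tau(n) = O(\log n / \log \log n)$, which is equivalent to the stated estimate. Writing the prime factorization $n = \prod_{p \mid n} p^{a_p}$, we have the two identities $\log n = \sum_{p \mid n} a_p \log p$ and $\log \tau(n) = \sum_{p \mid n} \log(a_p+1)$, so the task reduces to comparing these two weighted sums over the primes dividing $n$. The discrepancy between them is driven by the small primes, where $a_p$ can be large but the per-unit cost $a_p \log p$ in $\log n$ is small.

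The key step is to split the sum over $p \mid n$ at a threshold $y = y(n)$, to be optimized at the end. For ``large'' primes $p > y$ I will use the elementary inequality $\log(a+1) \le a \log 2$, valid for every integer $a \ge 1$ (the difference $a\log 2 - \log(a+1)$ vanishes at $a=1$ and has nonnegative derivative on $[1,\infty)$). Combined with $\log p > \log y$, this yields $\log(a_p+1) \le (\log 2 / \log y)\, a_p \log p$, and summing gives
$$\sum_{\substack{p \mid n \\ p > y}} \log(a_p+1) \;\le\; \frac{\log 2}{\log y}\, \log n.$$
For ``small'' primes $p \le y$, I will use the crude bound $a_p \le \log_2 n$ (from $p^{a_p} \le n$ and $p \ge 2$), which gives $\log(a_p+1) = O(\log \log n)$, together with the Chebyshev estimate $\pi(y) = O(y/\log y)$ to conclude
$$\sum_{\substack{p \mid n \\ p \le y}} \log(a_p+1) \;=\; O\!\left(\frac{y\, \log \log n}{\log y}\right).$$

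Finally, I will choose $y = \log n / \log \log n$, so that $\log y = \log \log n - \log \log \log n \sim \log \log n$. The first bound then becomes $O(\log n / \log \log n)$ directly, while the second becomes $O\!\bigl((\log n/\log\log n) \cdot \log\log n / \log\log n\bigr) = O(\log n / \log \log n)$. Adding the two estimates gives $\log \tau(n) = O(\log n / \log \log n)$, which exponentiates to the desired $\tau(n) = n^{O(1/\log \log n)}$.

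The main obstacle is really just the choice of threshold: pick $y$ too small and the small-prime contribution blows up through the $\log \log n$ factor, while too large and the $\log 2 / \log y$ factor fails to give any savings on the large-prime side. The choice $y = \log n / \log \log n$ is the natural balance point, and once it is made the remaining ingredients are routine: the inequality $\log(a+1) \le a \log 2$, the trivial bound $a_p \le \log_2 n$, and the Chebyshev upper bound on $\pi(y)$ (the full prime number theorem is not needed).
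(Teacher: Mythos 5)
Your argument is correct, but note that the paper does not prove this statement at all: it is quoted as Theorem \ref{wigertT} directly from Wigert's 1907 paper, so there is no internal proof to compare against. What you have written is the standard self-contained derivation of the $O$-form of Wigert's bound: the split of $\log\tau(n)=\sum_{p\mid n}\log(a_p+1)$ at a threshold $y$, the inequality $\log(a+1)\le a\log 2$ on the large primes (your verification via the derivative of $a\log 2-\log(a+1)$ is fine, since $1/(a+1)\le 1/2<\log 2$ for $a\ge 1$), the trivial bound $a_p\le\log_2 n$ together with Chebyshev's $\pi(y)=O(y/\log y)$ on the small primes, and the balancing choice $y=\log n/\log\log n$ all check out, and the two contributions are each $O(\log n/\log\log n)$ as you compute. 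The only difference from the cited source is one of precision: Wigert's original argument uses the prime number theorem to extract the sharp constant, $\limsup \log\tau(n)\cdot\log\log n/\log n=\log 2$, whereas your Chebyshev-only version gives an unspecified implied constant. That loss is irrelevant here, since the paper only ever uses the statement in the form $\tau(n)=n^{o(1)}$ (equivalently $k^{o(1)}$ after the reduction $tn<k^4$ or $a_r-a_j\le k^8$), so your more elementary proof fully supports every use of Theorem \ref{wigertT} in the main argument.
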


Before we prove the new bound, we start with the proof of the $O(k^3)$ upper bound from \cite{jungic} since the beginning of our proof is the same. There was a small typo in the proof in \cite{jungic} for the value of the number of $AP(k)$s in $[m]$, which we correct in the version below.

\begin{thm}\cite{jungic}
$T_{k} = O(k^3)$
\end{thm}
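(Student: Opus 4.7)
The plan is a double-counting argument. I would fix an equinumerous $t$-coloring of $[tn]$, so that each of the $t$ colors is used exactly $n$ times, and then show that whenever $t$ is a suitable cubic in $k$, the total number of AP(k)'s in $[tn]$ strictly exceeds the number of AP(k)'s that could possibly fail to be rainbow, forcing at least one rainbow AP(k) to exist.

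To count the AP(k)'s in $[tn]$ exactly, I would parametrize an AP(k) by its first term $a \geq 1$ and common difference $d \geq 1$ subject to $a + (k-1)d \leq tn$, obtaining the total $\sum_{d=1}^{D}(tn-(k-1)d)$ with $D = \lfloor (tn-1)/(k-1)\rfloor$. This is a quadratic in $tn$ of order $\frac{(tn)^2}{2(k-1)}$, and it is the exact closed form for this sum that is reported incorrectly in \cite{jungic}; I would write the corrected formula out so that the subsequent inequality is uniform in $n$.

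For the non-rainbow count I would note that every non-rainbow AP(k) contains some pair of equally-colored terms, so the non-rainbow count is at most the number of triples $(c,\{a,b\},(i,j))$ where $c$ is a color, $\{a,b\}$ is a pair of $c$-colored elements, and $1 \leq i < j \leq k$ is a pair of positions within a putative AP(k). Once $a,b,i,j$ are fixed, the common difference $(b-a)/(j-i)$ is forced, so the triple determines at most one AP(k), and summing yields the crude upper bound $t\binom{n}{2}\binom{k}{2}$.

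Comparing the two counts reduces the problem to the linear-in-$t$ inequality $\frac{(tn)^2}{2(k-1)} > t\binom{n}{2}\binom{k}{2}$, which solves to $t = \Theta(k^3)$ and, with careful bookkeeping, to the explicit bound $T_k \leq \frac{k(k-1)^2}{2}$. The one obstacle I expect is that this inequality must hold \emph{uniformly} for all $n \geq 1$, not merely asymptotically: one has to use the exact closed form of the AP(k) count instead of its leading-order estimate and check directly that the small cases $n = 1, 2$ are also covered. This step is routine but fiddly, and it is exactly where reproducing (and correcting) the calculation from \cite{jungic} matters.
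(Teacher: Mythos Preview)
Your proposal is correct and follows essentially the same double-counting argument as the paper: compare a lower bound on the total number of $AP(k)$s in $[tn]$ against the upper bound $t\binom{n}{2}\binom{k}{2}$ on non-rainbow $AP(k)$s, then solve for $t$. The only cosmetic difference is that you parametrize $AP(k)$s by (first term, common difference) while the paper uses the equivalent bijection with pairs $\{x,y\}\subset[m]$ satisfying $x\equiv y\pmod{k-1}$; both yield the same count, and the paper's explicit lower bound $\frac{tn(tn-3(k-1))}{2(k-1)}$ is exactly the kind of uniform-in-$n$ estimate you anticipate needing in place of the leading term $\frac{(tn)^2}{2(k-1)}$.
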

\begin{proof}
Let $m = a(k-1)+b$ with $k \geq 3$, $a \geq 1$, and $0 \leq b < k-1$. Since there is a bijective correspondence between the set of all $AP(k)$s and the set of all $2$-element sets $\left\{x, y \right\} \subset [m]$, $x < y$, with $x \equiv y \pmod{k-1}$, the number of all $AP(k)$s in $[m]$ is $b \binom{a+1}{2} + (k-b-1) \binom{a}{2}$, implying that the number of $AP(k)$s in $[t n]$ is greater than $\frac{t n(t n - 3(k-1))}{2(k-1)}$. 

For any equinumerous $t$-coloring of $[t n]$, in each color there are $\binom{n}{2}$ pairs of numbers. Each pair can be terms of at most $\binom{k}{2}$ different $AP(k)$s. Thus for any equinumerous $t$-coloring of $[t n]$, there are at most $t \binom{k}{2} \binom{n}{2}$ $AP(k)$s that are not rainbow. Thus $T_{k}$ is bounded by the smallest $t$ satisfying $\frac{t n(t n - 3(k-1))}{2(k-1)} \geq t \binom{k}{2} \binom{n}{2}$ for all $n$, giving the bound.
\end{proof}

\begin{thm}
$T_{k} = O(k^{5/2+\epsilon})$ for all $\epsilon > 0$
\end{thm}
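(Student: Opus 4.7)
The plan is to refine the argument behind the $O(k^3)$ bound by replacing the crude ``at most $\binom{k}{2}$ $AP(k)$s per pair'' estimate with the exact count: a pair $\{s,s'\}$ in a color class with $D = s'-s$ lies in $\sum_{\ell \mid D,\ \ell\leq k-1}(k-\ell) \leq (k-1)\tau_{k-1}(D)$ many $AP(k)$s of $[tn]$, where $\tau_{k-1}(D)$ denotes the number of divisors of $D$ in $[1,k-1]$. Hence for each color class $S$ of size $n$,
$$ X_S := |\{AP(k): |AP\cap S|\geq 2\}| \;\leq\; (k-1)\sum_{\{s,s'\}\in\binom{S}{2}}\tau_{k-1}(s'-s), $$
so the task reduces to bounding the right-hand side by $O(k^{1/2+\epsilon}n^2)$. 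This gives $X_S = O(k^{3/2+\epsilon}n^2)$, and summing over the $t$ color classes and comparing against the number of $AP(k)$s in $[tn]$, which exceeds $\tfrac{tn(tn-3(k-1))}{2(k-1)}$, then forces a rainbow $AP(k)$ to exist whenever $t \geq c_\epsilon k^{5/2+\epsilon}$, exactly as in the original argument.

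To bound the divisor sum, I would represent the pair--divisor incidences by a 0-1 matrix $M_S$ of size $\binom{n}{2}\times (k-1)$ with $(p,\ell)$-entry equal to $1$ iff $\ell$ divides the difference of pair $p$; its number of ones equals the divisor sum. The structural observation is that an all-ones $R_{a,b}$ submatrix of $M_S$ would force $a$ pairs of $S$ to have differences divisible by $L := \mathrm{lcm}(\ell_{j_1},\ldots,\ell_{j_b})$, so these $a$ pairs all lie within residue classes modulo $L$. Since each residue class in $[tn]$ contains at most $\lceil tn/L\rceil$ elements of $S$, the pair count is at most $tn^2/(2L)$ whenever $L>t$. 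Wigert's theorem enters through the choice of $b$: because the chosen $\ell_j$'s divide $L$ we have $\tau(L)\geq b$, and $\tau(L) = L^{O(1/\log\log L)}$ lets us pick $b = k^{O(1/\log\log k)}$ so as to force $L \geq tk^\beta$ for any preassigned $\beta>0$. Thus $M_S$ avoids $R_{a,b}$ with $a \leq \lceil n^2/(2k^\beta)\rceil+1$. Plugging these parameters into Theorem~\ref{KSTt} yields an estimate of the form $k^{1-\beta/b}n^2 + bn^2$ on the divisor sum, which gives the required $O(k^{1/2+\epsilon}n^2)$ once $\beta$ is tuned so that $\beta/b$ is slightly above $\tfrac12 - \epsilon$.

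The main obstacle will be the joint calibration of $(\beta, b)$: larger $b$ lets Wigert force $L$ (and hence $a$) in the desired direction, but shrinks the KST savings factor $k^{-\beta/b}$, while smaller $b$ sharpens that factor at the cost of a weaker Wigert guarantee. The $+\epsilon$ slack in the exponent $5/2+\epsilon$ is exactly the margin needed to absorb the $k^{o(1)}$ imprecision coming from Wigert, and one must also verify that the bound $|S\cap(r+L\mathbb{Z})|\leq \lceil tn/L\rceil$ gives a pair count that is uniform in $n$, so that no hidden $n$-dependence survives in $X_S$.
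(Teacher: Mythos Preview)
Your reduction to the divisor sum and its encoding as the number of ones in the $\binom{n}{2}\times(k-1)$ matrix $M_S$ is fine, but the $R_{a,b}$-avoidance step cannot be calibrated as you claim. To force $L=\mathrm{lcm}(\ell_{j_1},\dots,\ell_{j_b})>tk^\beta$ for \emph{every} choice of $b$ columns, $b$ must exceed the maximum number of divisors in $[1,k-1]$ of any integer $L\le tk^\beta$. Taking the columns $\{1,2,\dots,b\}$ already shows the obstruction: their lcm is $e^{(1+o(1))b}$, so to make it exceed $k^\beta$ you need $b\gtrsim\beta\ln k$. But the KST bound you wrote, $k^{1-\beta/b}n^2+bn^2$, only reaches $k^{1/2+\epsilon}n^2$ when $\beta/b\ge\tfrac12-\epsilon$, i.e.\ $b\le(2+o(1))\beta$. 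The two constraints $b\gtrsim\beta\ln k$ and $b\lesssim 2\beta$ are incompatible once $k\ge 8$, so no choice of $(\beta,b)$ works; for any admissible $b$ the factor $k^{-\beta/b}$ is only $k^{-o(1)}$, and your bound degenerates to $k^{1-o(1)}n^2$, recovering nothing beyond the trivial $O(k^3)$.

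The paper sidesteps this by a different matrix. It fixes one element $a_j$ of the color class, partitions the remaining elements into blocks of $k$ consecutive ones, and for each block $s$ with all internal gaps at most $k^8$ forms a $(k-1)\times k$ matrix $A^s$ with $A^s_{x,y}=1$ iff $x\mid a_{(s-1)k+y}-a_j$. An $R_{q,2}$ in $A^s$ forces two block elements $a_{r_1},a_{r_2}$ whose difference has $q$ divisors in $[1,k-1]$; since that difference is at most $k\cdot k^8=k^9$, Wigert gives $q\le k^{o(1)}$. Now KST is applied with the ``$t$'' parameter equal to $2$, so the saving comes from the $m^{1-1/2}=k^{1/2}$ factor rather than from $(a-1)^{1/b}$. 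In short, the paper puts the Wigert input on the $s$-side of KST with $t=2$, whereas your scheme puts it on the $t$-side, where the exponent $1/b$ kills the gain.
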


\begin{proof}
Let $t$ be minimal so that there is a rainbow $AP(k)$ in every equinumerous $t$-coloring of $[t n]$ for all $n \in \mathbb{N}$. From the bounds proved in \cite{jungic}, we may assume that $t < k^3$.

The overall method of the proof is the same as Jungi\'{c} et al \cite{jungic}, until the point where we find an upper bound on the number of $AP(k)$s in $[t n]$ that are not rainbow. For this bound, we split into cases depending on whether $n \leq k$.

If $n \leq k$, then $t n < k^4$. In each color, there are $\binom{n}{2}$ pairs of numbers. The difference between the numbers in each pair is less than $k^4$, so that difference has at most $k^{o(1)}$ divisors by Theorem \ref{wigertT}. Thus each pair can be terms of at most $k^{1+o(1)}$ different $AP(k)$s. Therefore for any $t$-regular coloring of $[t n]$, there are at most $t \binom{n}{2} k^{1+o(1)}$ $AP(k)$s that are not rainbow. The smallest $t$ satisfying $\frac{t n(t n - 3(k-1))}{2(k-1)} \geq t \binom{n}{2} k^{1+o(1)}$ for all $n \leq k$ is $O(k^{2+\epsilon})$ for all $\epsilon > 0$.

Now if $n > k$, fix an arbitrary color $c_i$ and let $a_1 < \dots < a_n$ be the $n$ numbers in $[t n]$ colored with $c_i$. We call the gap between $a_{j}$ and $a_{j+1}$ \emph{wide} if $a_{j+1}-a_{j} > k^{8}$. By the pigeonhole principle, there are at most $\frac{t n}{k^8} < \frac{n}{k^5}$ wide gaps between the consecutive numbers colored with $c_i$. 

Partition the numbers colored with $c_i$ into \emph{blocks} of $k$ consecutive numbers, where only the last block may have fewer than $k$ numbers. We say a block is wide if it contains both of the consecutive numbers in a wide gap. Since there are at most $\frac{n}{k^5}$ wide gaps between the consecutive numbers colored with $c_i$, there are at most $\frac{n}{k^5}$ wide blocks. In all other blocks, all of the gaps are not wide. 

Fix $1 \leq j \leq n-1$. The number of $a_{r}$ that lie in a wide block is at most $\frac{n}{k^4}$ since there are at most $\frac{n}{k^5}$ wide blocks and each block contains at most $k$ numbers. Thus there are at most $\frac{n}{k^4} \binom{k}{2}$ $AP(k)$s containing both $a_{j}$ and $a_{r}$ for some $r$ such that $j < r$ and $a_{r}$ lies in a wide block.

Next we bound the number of $AP(k)$s containing both $a_{j}$ and $a_{r}$ for some $r$ such that $j < r$ and $a_{r}$ lies in a non-wide block. Define a family of 0-1 matrices $\left\{A^{s}\right\}_{1 \leq s \leq \lceil \frac{n}{k} \rceil}$ corresponding to the blocks, each with $k-1$ rows and at most $k$ columns, such that $A^{s}_{x, y} = 1$ if and only if $x$ divides $a_{(s-1)k+y}-a_{j}$. 

For any fixed $r$ with $j < r$, the number of $AP(k)$s containing both $a_{j}$ and $a_{r}$ is at most $k \tau(a_{r}-a_{j})$. Thus the number of $AP(k)$s containing both $a_{j}$ and $a_{r}$ for some $r$ such that $j < r$ and $a_{r}$ lies in the block $s$ is at most $k$ times the number of ones in $A^{s}$. 

By Theorem \ref{wigertT}, $A^{s}$ avoids $R_{q, 2}$, where $q = k^{o(1)}$ for non-wide $s$. By the K\H{o}v\'{a}ri-S\'{o}s-Tur\'{a}n theorem, $A^{s}$ has at most $k^{3/2+o(1)}+k$ ones for non-wide $s$. Thus the number of $AP(k)$s containing both $a_{j}$ and $a_{r}$ for some $r$ such that $j < r$ and $a_{r}$ lies in the non-wide block $s$ is at most $k^{5/2+o(1)}+k^2$. 

Since there are at most $\lceil \frac{n}{k} \rceil \leq \frac{2n}{k}$ non-wide blocks, there are at most $t n (\frac{2n}{k} (k^{5/2+o(1)}+k^2) + \frac{n}{k^4} \binom{k}{2})$ $AP(k)$s in $[t n]$ that are not rainbow. The smallest $t$ satisfying $\frac{t n(t n - 3(k-1))}{2(k-1)} \geq t n (\frac{2n}{k} (k^{5/2+o(1)}+k^2) + \frac{n}{k^4} \binom{k}{2})$ for all $n > k$ is $O(k^{5/2+\epsilon})$ for all $\epsilon > 0$, completing the proof.
\end{proof}

\end{document}